\documentclass[11pt,reqno]{amsart}
\usepackage{amsmath,amsfonts,amssymb,amsthm,bm,booktabs,color,setspace}
\usepackage[margin=1.375in]{geometry}
\usepackage[
	colorlinks=true, 
	pdfstartview=FitV, 
	linkcolor=blue, 
	citecolor=blue, 
	urlcolor=blue, 
	bookmarksdepth=2]{hyperref}
\usepackage[all]{hypcap}
\usepackage{tikz}
\usetikzlibrary{matrix}
\tikzset{tab/.style={matrix of math nodes,column sep=-.35, row sep=-.35,text height=7pt,text width=7pt,align=center,inner sep=2,font=\footnotesize}}
\newcommand{\tikzmark}[2]{\tikz[overlay,remember picture,baseline] \node [anchor=base] (#1) {$#2$};}
\newcommand{\DrawLine}[3][]{%
  \begin{tikzpicture}[overlay,remember picture]
    \draw[#1] (#2.210) -- (#3.30);
  \end{tikzpicture}
}

\makeatletter
\def\@tocline#1#2#3#4#5#6#7{\relax
  \ifnum #1>\c@tocdepth 
  \else
    \par \addpenalty\@secpenalty\addvspace{#2}%
    \begingroup \hyphenpenalty\@M
    \@ifempty{#4}{%
      \@tempdima\csname r@tocindent\number#1\endcsname\relax
    }{%
      \@tempdima#4\relax
    }%
    \parindent\z@ \leftskip#3\relax \advance\leftskip\@tempdima\relax
    \rightskip\@pnumwidth plus4em \parfillskip-\@pnumwidth
    #5\leavevmode\hskip-\@tempdima
      \ifcase #1
       \or\or \hskip 1em \or \hskip 2em \else \hskip 3em \fi%
      #6\nobreak\relax
    \dotfill\hbox to\@pnumwidth{\@tocpagenum{#7}}\par
    \nobreak
    \endgroup
  \fi}
\makeatother

\numberwithin{equation}{section}
\numberwithin{figure}{section}
\numberwithin{table}{section}
\newtheorem{Theorem}[equation]{Theorem}

\newtheorem{Proposition}[equation]{Proposition} 
\newtheorem{Lemma}[equation]{Lemma}

\theoremstyle{definition}

\newtheorem{Definition}[equation]{Definition}
\newtheorem{Example}[equation]{Example}
\newtheorem{Remark}[equation]{Remark}

\newenvironment{acknowledgements}
{\bigskip\noindent\footnotesize\textbf{Acknowledgements.} }
{\medskip}

\newcommand{\arxiv}[1]{\href{http://arxiv.org/abs/#1}{\tt arXiv:\nolinkurl{#1}}}

\newcommand{\bc}{\mathbf{C}}

\newcommand{\BR}{\mathrm{br}}
\newcommand{\bz}{\mathbf{Z}}

\newcommand{\Kp}{\mathrm{Kp}}
\newcommand{\mdots}{\raisebox{1.5ex}{$\vdots$}}

\newcommand{\R}{\mathcal{R}}
\newcommand{\stack}[1]{\begin{smallmatrix}#1\end{smallmatrix}}
\newcommand{\TT}{\mathcal{T}}
\newcommand{\wt}{{\rm wt}}

\begin{document}

\title[PBW bases and tableaux]{PBW bases and marginally large tableaux in type~D}

\author{Ben Salisbury}
\thanks{B.S.\ was partially supported by CMU Early Career grant \#C62847}
\address{Department of Mathematics, Central Michigan University, Mount Pleasant, MI}
\email{ben.salisbury@cmich.edu}
\urladdr{http://people.cst.cmich.edu/salis1bt/}
\author{Adam Schultze}
\thanks{A.S.\ and P.T.\ were partially supported by NSF grant DMS-1265555}
\address{Department of Mathematics and Statistics, University at Albany, Albany, NY}
\email{alschultze@albany.edu}
\author{Peter Tingley}
\address{Department of Mathematics and Statistics, Loyola University, Chicago, IL}
\email{ptingley@luc.edu}
\urladdr{http://webpages.math.luc.edu/~ptingley/}
 
\begin{abstract}
We give an explicit description of the unique crystal isomorphism between two realizations of $B(\infty)$ in type $D$: that using marginally large tableaux and that using PBW monomials with respect to one particularly nice reduced expression of the longest word. 
\end{abstract}

\maketitle

 
\section{Introduction}

For any symmetrizable Kac-Moody algebra, the crystal $B(\infty)$ is a combinatorial object that contains information about the corresponding universal enveloping algebra and its integrable highest weight representations. Kashiwara's definition of $B(\infty)$ uses some intricate algebraic constructions, but it can often be realized in quite simple ways. We consider two such realizations.
\begin{enumerate}
\item The construction using marginally large tableaux from \cite{HL08}.

\item The recent construction using bracketing rules on Kostant partitions from \cite{SST1}, which is naturally identified with the algebraic crystal structure on PBW monomials for one particularly nice reduced expression of $w_0$. 
\end{enumerate}
We give an explicit description of the unique crystal isomorphism between these two realizations (see Theorem \ref{th:main}). This is a type $D$ analogue of a type $A$ result that can be found in \cite{CT15}, although the type $D$ situation is a little different. Most notably, the isomorphism is not as ``local:" in type $A$, the map from tableaux to Kostant partitions simply maps each box in the tableau to a root, but in type $D$ one must consider multiple boxes at once.  In the final section we give a diagrammatic description of Kostant partitions and the crystal operators on them which mimics the diagrams implicit from the multisegment picture in type $A$ \cite{CT15,JL09,LTV99,Z80}.

\section{Background}
Let $\mathfrak{g}$ be the Lie algebra of type $D_n$ with Cartan matrix and Dynkin diagram
\[
A = (a_{ij}) =
\left(\begin{smallmatrix}
2 &-1 & 0 & \cdots & 0 & 0 & 0 \\
-1& 2 &-1 & \cdots & 0 & 0 & 0 \\
0 &-1 & 2 & \cdots & 0 & 0 & 0 \\
& & & \ddots & & & \\
0 & 0 & 0 & \cdots & 2 &-1 &-1 \\
0 & 0 & 0 & \cdots & -1& 2 & 0 \\
0 & 0 & 0 & \cdots & -1& 0 & 2 \\ 
\end{smallmatrix}
\right), 
\ \ \ \ \ 
\begin{tikzpicture}[baseline,scale=1,font=\scriptsize]
\foreach \x in {1,2,4}
{\node[circle,draw,scale=.45] (\x) at (\x,0) {};}
\node[label={below:$\alpha_1$}] at (1,0) {}; 
\node[label={below:$\alpha_2$}] at (2,0) {}; 
\node[label={below:$\alpha_{n-2}$}] at (4,0) {}; 
\node[label={above:$\alpha_{n-1}$}] at (5,.5) {}; 
\node[label={below:$\alpha_{n}$}] at (5,-.5) {};
\node at (3,0) {$\cdots$};
\node[circle,draw,scale=.45] (5) at (5,.5) {};
\node[circle,draw,scale=.45] (6) at (5,-.5) {};
\draw[-] (1) -- (2);
\draw[-] (2) -- (2.75,0);
\draw[-] (3.25,0) -- (4);
\draw[-] (4) -- (5);
\draw[-] (4) -- (6);
\node at (5.5,-0.85) {.};
\end{tikzpicture}
\]
Let $\{\alpha_1,\dots,\alpha_n\}$ be the simple roots and $\{\alpha_1^\vee,\dots,\alpha_n^\vee\}$ the simple coroots, related by the inner product $\langle\alpha_j^\vee,\alpha_i\rangle = a_{ij}$.  Define the fundamental weights $\{\omega_1,\dots,\omega_n\}$ by $\langle\alpha_i^\vee,\omega_j\rangle = \delta_{ij}$.  Then the weight lattice is $P = \bz\omega_1 \oplus \cdots \oplus \bz\omega_n$ and the coweight lattice is $P^\vee = \bz\alpha_1^\vee \oplus \cdots \oplus \bz\alpha_n^\vee$.  The Cartan subalgebra $\mathfrak{h}$ is given by $\bc\otimes_\bz P^\vee$.  Also, let $\Phi$ denote the roots associated to $\mathfrak{g}$, with the set of positive roots denoted $\Phi^+$.  The list of positive roots is given in Table \ref{posroots}.

\begin{table}[t]
\renewcommand{\arraystretch}{1.2}
\[
\begin{array}{cl}\toprule
\beta_{i,k}= \alpha_i + \cdots + \alpha_{k}, & 1\le i\le k \le n-1 \\
\gamma_{i,k}=\alpha_i + \cdots + \alpha_{n-2}+ \alpha_{n} + \alpha_{n-1}+ \cdots + \alpha_k, & 1\le i < k \le n\\ \midrule
\beta_{i,k}= \epsilon_i-\epsilon_{k+1}, & 1\le i\le k \le n-1 \\
\gamma_{i,k}= \epsilon_i+\epsilon_k, & 1\le i < k \le n\\\bottomrule
\end{array}
\]
\caption{Positive roots of type $D_n$, expressed both as a linear combination of simple roots and in the canonical realization following \cite{bourbaki}.}\label{posroots}
\end{table}

Let $B(\infty)$ be the infinity crystal associated to $\mathfrak{g}$ as defined in \cite{K91}. This is a countable set along with operators $e_i$ and $f_i$ which roughly correspond to the Chevalley generators of $\mathfrak{g}$. We don't need the details of the definition of $B(\infty)$, as we just consider two explicitly defined ways to realize it.

\subsection{Type D marginally large tableaux}
\begin{Definition} 
A marginally large tableau of type $D_n$ is an  $n-1$ row tableau on the alphabet
\[
J(D_n) := \left\{ 1 \prec \cdots \prec n-1 \prec \begin{array}{c} n \\ \overline n \end{array} \prec \overline{n-1} \prec \cdots \prec \overline 1\right\}
\]
which satisfies the following conditions.
\begin{enumerate}

\item The first column has entries $1, 2,  \dots, n-1$ in that order.

\item Entries weakly increase along rows.

\item The number of $i$-boxes in the $i$th row 
is exactly one more than the total number of boxes in the $(i+1)$st row.   
We call this condition ``marginal largeness."
\item Every entry  in the $i$th row is $\preceq \overline\imath$.
\item The entries $n$ and $\overline n$ do not appear in the same row.
\end{enumerate}
Denote by $\TT(\infty)$ the set of marginally large tableaux.
\end{Definition}

\begin{Example}
In type $D_4$, the elements of $\TT(\infty)$ all have the form
\[
T=\begin{tikzpicture}[baseline,font=\tiny]
\matrix [matrix of math nodes,column sep=-.4, row sep=-.5,text height=6,align=center,inner sep=1.5] 
 {
 	\node[draw,fill=gray!30]{$1$}; & 
	\node[draw,fill=gray!30]{\hspace{.75ex} $1\cdots 1$\hspace{.75ex} }; & 
	\node[draw,fill=gray!30]{$1 \cdots 1$}; & 
	\node[draw,fill=gray!30]{$1$}; & 
	\node[draw,fill=gray!30]{$1\cdots1$}; & 
	\node[draw,fill=gray!30]{\hspace{.75ex} $1\cdots1$\hspace{.75ex} }; & 
	\node[draw,fill=gray!30]{$1\cdots1$}; & 
	\node[draw,fill=gray!30]{$1\cdots1$}; & 
	\node[draw,fill=gray!30]{$1$}; & 
	\node[draw]{$2\cdots 2$}; & 
	\node[draw]{$3\cdots 3$}; & 
	\node[draw]{\raisebox{.45ex}{$x_1\cdots x_1$}}; & 
	\node[draw]{$\overline 3 \cdots \overline 3$}; & 
	\node[draw]{$\overline 2\cdots \overline 2$}; & 
	\node[draw]{$\overline 1 \cdots \overline 1$};\\
  	\node[draw,fill=gray!30]{$2$}; & 
	\node[draw,fill=gray!30]{\hspace{.75ex} $2\cdots 2$\hspace{.75ex} }; & 
	\node[draw,fill=gray!30]{$2\cdots 2$}; & 
	\node[draw,fill=gray!30]{$2$}; & 
	\node[draw]{$3 \cdots 3$}; & 
	\node[draw]{\raisebox{.45ex}{$x_2\cdots x_2$}}; & 
	\node[draw]{$\overline 3\cdots \overline 3$}; &
	\node[draw]{$\overline 2\cdots \overline 2$}; \\
  	\node[draw,fill=gray!30]{$3$}; & 
	\node[draw]{\raisebox{.45ex}{$x_3\cdots x_3$}}; & 
	\node[draw]{$\overline 3 \cdots \overline 3$}; \\
 };
\end{tikzpicture},
\]
where $x_i \in \{4,\overline4\}$ for each $i=1,2,3$. We typically shade the $i$-boxes in row $i$, as these are basically placeholders. 
In particular, the unique element of weight zero is
\[
T_\infty = \begin{tikzpicture}[baseline]
\matrix [tab] 
 {
 	\node[draw,fill=gray!30]{1}; & 
	\node[draw,fill=gray!30]{1}; & 
	\node[draw,fill=gray!30]{1}; \\
  	\node[draw,fill=gray!30]{2}; & 
	\node[draw,fill=gray!30]{2}; \\
  	\node[draw,fill=gray!30]{3}; \\
 };
\end{tikzpicture}.
\]
\end{Example}

\begin{Definition}
Fix a type $D_n$ marginally large tableau. The reading word $\mathrm{read}(T)$ is obtained by reading right to left along rows, starting at the top and working down. 
\end{Definition}

\begin{Definition}
For each $1 \leq i \leq n$, the bracketing sequence $\BR_i(T)$ is the sequence obtained by placing a  `)' under each letter for which there is an $i$-colored arrow entering the corresponding box in Figure \ref{fundD}, and a `(' under each letter for which there is an $i$-colored arrow leaving the corresponding box.
Sequentially cancel all ()-pairs to obtain a sequence of the form $)\cdots)(\cdots($. The remaining brackets are called {\it uncanceled}. 
\end{Definition}

\begin{figure}
\[
\begin{tikzpicture}[baseline=-4,xscale=1.25]
 \node (1) at (0,0) {$\boxed1$};
 \node (d1) at (1.5,0) {$\cdots$};
 \node (n-1) at (3,0) {$\boxed{n-1}$};
 \node (n) at (4.5,.75) {$\boxed{n}$};
 \node (bn) at (4.5,-.75) {$\boxed{\overline{n}}$};
 \node (bn-1) at (6,0) {$\boxed{\overline{n-1}}$};
 \node (d2) at (7.5,0) {$\cdots$};
 \node (b1) at (9,0) {$\boxed{\overline{1}}$};
 \draw[->] (1) to node[above]{\tiny$1$} (d1);
 \draw[->] (d1) to node[above]{\tiny$n-2$} (n-1);
 \draw[->] (n-1) to node[above,sloped]{\tiny$n-1$} (n);
 \draw[->] (n-1) to node[below,sloped]{\tiny$n$} (bn);
 \draw[->] (n) to node[above,sloped]{\tiny$n$} (bn-1);
 \draw[->] (bn) to node[below,sloped]{\tiny$n-1$} (bn-1);
 \draw[->] (bn-1) to node[above]{\tiny$n-2$} (d2);
 \draw[->] (d2) to node[above]{\tiny$1$} (b1);
\end{tikzpicture}
\]
\caption{The fundamental crystal of type $D_n$.}\label{fundD}
\end{figure}

\begin{Definition}\label{def:T-ops}
Let $T\in \TT(\infty)$ and $i\in I$.
\begin{enumerate}
\item Let $x$ be the letter in $T$ corresponding to the rightmost uncanceled `$)$' in $\BR_i(T)$.  Then $e_iT$ is the tableau obtained from $T$ by replacing the box containing $x$ by the box containing the letter at the other end of the $i$-arrow from $x$ in Figure \ref{fundD}.  If the result is not marginally large, then delete exactly one column containing the elements $1,\dots,i$ so that the result is marginally large.  If no such `$)$` exists, then define $e_iT =0$.
\item Let $y$ be the letter in $T$ corresponding to the leftmost uncanceled `$($' in $\BR_i(T)$.  Then $f_iT$ is the tableau obtained from $T$ by replacing the box containing $y$ by the box containing the letter at the other end of the $i$-arrow from $y$ in Figure \ref{fundD}.  If the result is not marginally large, then insert exactly one column containing the elements $1,\dots,i$ so that the result is marginally large.
\end{enumerate}
\end{Definition}

\begin{Example} \label{ex:DDD}
Consider $D_4$ and 
\[
T = \begin{tikzpicture}[baseline]
\matrix [tab] 
 {
 	\node[draw,fill=gray!30]{1}; & 
	\node[draw,fill=gray!30]{1}; & 
	\node[draw,fill=gray!30]{1}; & 
	\node[draw,fill=gray!30]{1}; & 
	\node[draw,fill=gray!30]{1}; & 
	\node[draw,fill=gray!30]{1}; & 
	\node[draw,fill=gray!30]{1}; & 
	\node[draw,fill=gray!30]{1}; & 
	\node[draw,fill=gray!30]{1}; & 
	\node[draw]{2}; & 
	\node[draw]{2}; &
	\node[draw]{\overline 3}; &
	\node[draw]{\overline 1}; & 
	\node[draw]{\overline 1}; & 
	\node[draw]{\overline 1};\\
  	\node[draw,fill=gray!30]{2}; & 
	\node[draw,fill=gray!30]{2}; & 
	\node[draw,fill=gray!30]{2}; & 
	\node[draw,fill=gray!30]{2}; & 
	\node[draw]{3}; & 
	\node[draw]{\overline 4}; &
	\node[draw]{\overline 3}; &
	\node[draw]{\overline 3}; \\
  	\node[draw,fill=gray!30]{3}; & 
	\node[draw]{\overline 4}; &  
	\node[draw]{\overline 3}; \\
 };
\end{tikzpicture}\ .
\]
To calculate $e_4$ and $f_4$, the relevant arrows from Figure \ref{fundD} are
\[
\begin{tikzpicture}[xscale=1.5]
\node[draw,text height=7pt,outer sep=3] (3) at (0,0) {$3$};
\node[draw,text height=7pt,outer sep=3] (-4) at (1,0) {$\overline4$};
\node[draw,text height=7pt,outer sep=3] (4) at (3,0) {$4$};
\node[draw,text height=7pt,outer sep=3] (-3) at (4,0) {$\overline3$};
\path[->,font=\scriptsize]
 (3) edge node[above]{$4$} (-4)
 (4) edge node[above]{$4$} (-3);
\end{tikzpicture}
\]
Thus each $3$ and $4$ will contribute `$($', each $\overline4$ and $\overline3$ will contribute `$)$', and all other letters will contribute nothing.
The reading word and bracketing sequence are
\[
\arraycolsep=2pt
\begin{array}{rccccccccccccccccccccccccccc}
\mathrm{read}(T)= & \overline1 & \overline1 & \overline1 & \overline3 & 2 & 2 & 1 & 1 & 1 & 1 & 1 & 1 & 1 & 1 & 1 & \overline3 & \overline3 & \overline4 & 3 & 2 & 2 & 2 & 2 & \overline3 & \overline4 & 3 \\[2pt]
\BR_4(T) = & & & &)& & & & & & & & & & & &)&)&)&\tikzmark{left1}{{\color{red}(}}& & & & &\tikzmark{right1}{{\color{red})}}&{\color{blue} )}&{\color{red} (}.
\end{array}
\]
\DrawLine[red, thick, opacity=0.5]{left1}{right1}

\noindent The rightmost uncanceled `$)$' is the one shown in blue, so $e_4$ changes the corresponding $\overline4$-box in the third row to a $3$-box.  To maintain marginal largeness, we must also slide the first row one unit to the left so that we have exactly one more $3$-box in the third row than total number of boxes in the (empty) fourth row:
\[
e_4T = 
\begin{tikzpicture}[baseline]
\matrix [tab] 
 {
 	\node[draw,fill=gray!30]{1}; & 
	\node[draw,fill=gray!30]{1}; & 
	\node[draw,fill=gray!30]{1}; & 
	\node[draw,fill=gray!30]{1}; & 
	\node[draw,fill=gray!30]{1}; & 
	\node[draw,fill=gray!30]{1}; & 
	\node[draw,fill=gray!30]{1}; & 
	\node[draw,fill=gray!30]{1}; & 
	\node[draw]{2}; &
	\node[draw]{2}; &
	\node[draw]{\overline 3}; &
	\node[draw]{\overline 1}; & 
	\node[draw]{\overline 1}; & 
	\node[draw]{\overline 1};\\
  	\node[draw,fill=gray!30]{2}; & 
	\node[draw,fill=gray!30]{2}; & 
	\node[draw,fill=gray!30]{2}; & 
	\node[draw]{3}; & 
	\node[draw]{\overline 4}; &
	\node[draw]{\overline 3}; &
	\node[draw]{\overline 3}; \\
  	\node[draw,fill=gray!30]{3}; & 
	\node[draw]{\overline 3}; \\
 };
\end{tikzpicture}\ .
\]
Similarly,
\[
f_4T = 
\begin{tikzpicture}[baseline]
\matrix [tab] 
 {
 	\node[draw,fill=gray!30]{1}; & 
	\node[draw,fill=gray!30]{1}; & 
	\node[draw,fill=gray!30]{1}; & 
	\node[draw,fill=gray!30]{1}; & 
	\node[draw,fill=gray!30]{1}; & 
	\node[draw,fill=gray!30]{1}; & 
	\node[draw,fill=gray!30]{1}; & 
	\node[draw,fill=gray!30]{1}; & 
	\node[draw,fill=gray!30]{1}; & 
	\node[draw,fill=gray!30]{1}; & 
	\node[draw]{2}; & 
	\node[draw]{2}; &
	\node[draw]{\overline 3}; &
	\node[draw]{\overline 1}; & 
	\node[draw]{\overline 1}; & 
	\node[draw]{\overline 1};\\
  	\node[draw,fill=gray!30]{2}; & 
	\node[draw,fill=gray!30]{2}; & 
	\node[draw,fill=gray!30]{2}; & 
	\node[draw,fill=gray!30]{2}; & 
	\node[draw,fill=gray!30]{2}; & 
	\node[draw]{3}; & 
	\node[draw]{\overline 4}; &
	\node[draw]{\overline 3}; &
	\node[draw]{\overline 3}; \\
  	\node[draw,fill=gray!30]{3}; & 
	\node[draw]{\overline 4}; &
	\node[draw]{\overline 4}; &  
	\node[draw]{\overline 3}; \\
 };
\end{tikzpicture}\ .
\]
\end{Example}

We are using the so-called middle-Eastern reading, as defined in \cite{HK02}.  This differs from the original definition of the signature rule for element of $\TT(\infty)$ given in \cite{HL08} which uses the far-Eastern reading.  However, the resulting operators are identical.

\begin{Proposition} \label{prop:row-reading}
The operators $e_i$ and $f_i$ on $\TT(\infty)$ defined using the far-Eastern reading and the middle-Eastern reading, respectively, are identical.
\end{Proposition}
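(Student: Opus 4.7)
Fix $T \in \TT(\infty)$ and $i \in I$. The bracket contributed to $\BR_i$ by an entry $x$ of $T$ is determined entirely by the pair $(x,i)$ via Figure~\ref{fundD}, independent of the position of $x$ in $T$; hence the middle-Eastern and far-Eastern bracketing sequences are two orderings of the same multiset of brackets. Since $e_iT$ (resp.\ $f_iT$) is determined by the box of $T$ carrying the rightmost uncanceled `)' (resp.\ leftmost uncanceled `(') in $\BR_i(T)$, the proposition reduces to showing that in both readings the box identified by the rightmost uncanceled `)' (resp.\ leftmost uncanceled `(') is the same box of $T$.

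The plan is to leverage the confluence of bracket cancellation: if a bracket word factors as $w = w_1 w_2 \cdots w_k$, then replacing each $w_j$ by its internal reduction $)^{a_j}(^{b_j}$ leaves the global set of uncanceled bracket positions unchanged within each $w_j$-interval. I take the $w_j$ to be the successive row contributions for middle-Eastern and the successive column contributions for far-Eastern. Within a row the entries are weakly increasing, and within a column they are strictly increasing (with $n$ and $\overline n$ incomparable in a column and forbidden from sharing a row by condition~(5)). For fixed $i$, the contributors to $\BR_i$ form a small set---namely $\{i, i{+}1, \overline{i{+}1}, \overline i\}$ when $i \le n-2$, and $\{n{-}1, n, \overline n, \overline{n{-}1}\}$ when $i \in \{n-1, n\}$---so each row reduces explicitly to some $)^{A_r}(^{B_r}$ and each column reduces to some $)^{A'_c}(^{B'_c}$, with the integers easily expressed as multiplicities of contributing letters.

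The remaining task is to match the uncanceled brackets on the two sides box-by-box. I would proceed by tracking, for each bracket surviving row-reduction in the middle-Eastern word, the specific entry of $T$ that produced it, then verifying the same entry produces an uncanceled bracket after column-reduction in the far-Eastern word; moreover that the rightmost uncanceled `)' and leftmost uncanceled `(' of the two reduced words point to the same entry. For $i \le n-2$ only four letter types are involved and the calculation parallels the classical type-$A$ signature equivalence. The main obstacle is $i \in \{n-1, n\}$: here both $n$ and $\overline n$ can contribute, the forked Dynkin structure enters, and $n$ and $\overline n$ may appear in either relative order within a single column. I expect condition~(5) to be the crucial combinatorial input that forces the row- and column-side bookkeeping to match in these cases, and this to be the technical heart of the argument.
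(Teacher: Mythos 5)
Your proposal is an outline rather than a proof: the step you label ``the remaining task'' --- matching the uncanceled brackets of the row-reduced and column-reduced words and showing the two extremal uncanceled brackets point to compatible boxes --- is exactly the entire content of the argument, and you do not carry it out. The paper's proof does this by two observations you do not make. First, the brackets coming from \emph{unshaded} boxes occur in the same relative order in the far-Eastern and middle-Eastern words, so the two words can only differ in the placement of the brackets coming from shaded boxes (the shaded $i$'s in row $i$ and shaded $(i{+}1)$'s in row $i+1$), which sit at the right end of the relevant portion of each word; one then writes the two tails explicitly, far-Eastern $(^{c_{i,i}-c_{i+1,i+1}+c_{\overline{\imath+1},i+1}}()\cdots()$ versus middle-Eastern $(^{c_{i,i}}(^{c_{\overline{\imath+1},i+1}})^{c_{i+1,i+1}}$. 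Second, marginal largeness (condition (3), giving $c_{i,i}>c_{i+1,i+1}$) forces both tails to have no uncanceled `$)$' and the same number of uncanceled `$($', the first of which is a shaded $i$ in both cases. Without some substitute for these two facts, your confluence-of-cancellation framework has nothing to bite on.

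Two further points. Your reduction to ``the two readings identify the same box of $T$'' is stronger than what is true or needed: the two readings can select different boxes (e.g.\ different shaded $i$'s, or different boxes within a constant run), and the correct statement is that they select boxes of the same \emph{type} (same content, same row), which suffices because each rule always applies $f_i$ (resp.\ $e_i$) to the extremal box of a given type, so the resulting tableaux coincide. Also, your expectation that $i\in\{n-1,n\}$ with condition (5) is the technical heart is misplaced: in the paper's argument that case is the \emph{easier} one (only the shaded $(n{-}1)$-boxes interfere, and the forked cases follow from each other via the diagram automorphism swapping $n$ and $\overline n$), while the crucial combinatorial input throughout is marginal largeness, not condition (5). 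Condition (5) plays its role in Lemma \ref{one row}, not here.
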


\begin{proof}
Fix $T\in \TT(\infty)$ and let $c_{ij}$ be the number of $j$-boxes in row $i$ of $T$. First assume  $1 \le i \le n-2$. Then all brackets used in calculating $f_i$ come from rows $1, \ldots, i+1$. The brackets corresponding to unshaded boxes come in exactly the same order for the two readings. Thus the only difference between the two bracket orders is at the right end of the sequence, where one has:
\begin{equation}
\begin{aligned}
& \text{far-Eastern: } \cdots
(^{c_{i,i}-c_{i+1,i+1}+c_{\overline{\iota+1}, i+1}}
\underbrace{()\cdots()}_{c_{i+1,i+1}} ,\\
& \text{middle-Eastern: }
\cdots
(^{c_{i,i}}
(^{c_{\overline{\imath+1},i+1}}
)^{c_{i+1,i+1}}.
\end{aligned}
\end{equation}
Since $c_{i,i}>c_{i+1,i+1}$, the portions shown each have no uncanceled, `),' and they have the same number of uncanceled `(,' with the first uncanceled `(' corresponding to a shaded $i$. It follows that the first uncanceled bracket of each type in the two sequences corresponds to a box of the same type (i.e., same content and on same row). Clearly both rules always apply $f_i$ to the leftmost box of a given type, and $e_i$ to the rightmost, so the two rules agree. 

The argument for $i=n-1,n$ is similar, and in fact simpler, since the only shaded boxes that are relevant are the shaded $n-1$. 
\end{proof}

\begin{Remark}
Unlike in type $A$, the operators on finite type $D$ tableaux using these two readings are different. They only agree for marginally large tableaux. 
\end{Remark}

Since the shaded boxes of a marginally large tableau are merely placeholders, we sometimes omit them.   For a tableau $T$, consider the {\it reduced form} of $T$, which is obtained by removing all shaded boxes and sliding the rows so that the result is left-justified.

\begin{Example}
Continuing Example \ref{ex:DDD}, we can picture the crystal graph around $T$ using tableaux in reduced form.
\[
\begin{tikzpicture}[xscale=2,yscale=2.5,font=\footnotesize]
\node (T) at (0,0)
{\begin{tikzpicture}[baseline]
\matrix [tab] 
 { 
	\node[draw]{2}; & 
	\node[draw]{2}; &
	\node[draw]{\overline 3}; &
	\node[draw]{\overline 1}; & 
	\node[draw]{\overline 1}; & 
	\node[draw]{\overline 1};\\ 
	\node[draw]{3}; & 
	\node[draw]{\overline 4}; &
	\node[draw]{\overline 3}; &
	\node[draw]{\overline 3}; \\ 
	\node[draw]{\overline 4}; &  
	\node[draw]{\overline 3}; \\
 };
\end{tikzpicture}};
\node (e1T) at (-1.5,1)
{\begin{tikzpicture}[baseline]
\matrix [tab] 
 { 
	\node[draw]{2}; &
	\node[draw]{\overline 3}; &
	\node[draw]{\overline 1}; & 
	\node[draw]{\overline 1}; & 
	\node[draw]{\overline 1};\\ 
	\node[draw]{3}; & 
	\node[draw]{\overline 4}; &
	\node[draw]{\overline 3}; &
	\node[draw]{\overline 3}; \\ 
	\node[draw]{\overline 4}; &  
	\node[draw]{\overline 3}; \\
 };
\end{tikzpicture}};
\node (e3T) at (0,1)
{\begin{tikzpicture}[baseline]
\matrix [tab] 
 { 
	\node[draw]{2}; &
	\node[draw]{2}; &
	\node[draw]{\overline 3}; &
	\node[draw]{\overline 1}; & 
	\node[draw]{\overline 1}; & 
	\node[draw]{\overline 1};\\ 
	\node[draw]{3}; & 
	\node[draw]{\overline 4}; &
	\node[draw]{\overline 4}; &
	\node[draw]{\overline 3}; \\ 
	\node[draw]{\overline 4}; &  
	\node[draw]{\overline 3}; \\
 };
\end{tikzpicture}};
\node (e4T) at (1.5,1)
{\begin{tikzpicture}[baseline]
\matrix [tab] 
 { 
	\node[draw]{2}; &
	\node[draw]{2}; &
	\node[draw]{\overline 3}; &
	\node[draw]{\overline 1}; & 
	\node[draw]{\overline 1}; & 
	\node[draw]{\overline 1};\\ 
	\node[draw]{3}; & 
	\node[draw]{\overline 4}; &
	\node[draw]{\overline 3}; &
	\node[draw]{\overline 3}; \\ 
	\node[draw]{\overline 3}; \\
 };
\end{tikzpicture}};
\node (f1T) at (-2.25,-1)
{\begin{tikzpicture}[baseline]
\matrix [tab] 
 { 
	\node[draw]{2}; &
	\node[draw]{2}; &
	\node[draw]{2}; &
	\node[draw]{\overline 3}; &
	\node[draw]{\overline 1}; & 
	\node[draw]{\overline 1}; & 
	\node[draw]{\overline 1};\\ 
	\node[draw]{3}; & 
	\node[draw]{\overline 4}; &
	\node[draw]{\overline 3}; &
	\node[draw]{\overline 3}; \\ 
	\node[draw]{\overline 4}; &  
	\node[draw]{\overline 3}; \\
 };
\end{tikzpicture}};
\node (f2T) at (-.75,-1)
{\begin{tikzpicture}[baseline]
\matrix [tab] 
 { 
	\node[draw]{2}; &
	\node[draw]{2}; &
	\node[draw]{\overline 2}; &
	\node[draw]{\overline 1}; & 
	\node[draw]{\overline 1}; & 
	\node[draw]{\overline 1};\\ 
	\node[draw]{3}; & 
	\node[draw]{\overline 4}; &
	\node[draw]{\overline 3}; &
	\node[draw]{\overline 3}; \\ 
	\node[draw]{\overline 4}; &  
	\node[draw]{\overline 3}; \\
 };
\end{tikzpicture}};
\node (f3T) at (.75,-1)
{\begin{tikzpicture}[baseline]
\matrix [tab] 
 { 
	\node[draw]{2}; &
	\node[draw]{2}; &
	\node[draw]{\overline 3}; &
	\node[draw]{\overline 1}; & 
	\node[draw]{\overline 1}; & 
	\node[draw]{\overline 1};\\ 
	\node[draw]{3}; & 
	\node[draw]{\overline 3}; &
	\node[draw]{\overline 3}; &
	\node[draw]{\overline 3}; \\ 
	\node[draw]{\overline 4}; &  
	\node[draw]{\overline 3}; \\
 };
\end{tikzpicture}};
\node (f4T) at (2.25,-1)
{\begin{tikzpicture}[baseline]
\matrix [tab] 
 { 
	\node[draw]{2}; &
	\node[draw]{2}; &
	\node[draw]{\overline 3}; &
	\node[draw]{\overline 1}; & 
	\node[draw]{\overline 1}; & 
	\node[draw]{\overline 1};\\ 
	\node[draw]{3}; & 
	\node[draw]{\overline 4}; &
	\node[draw]{\overline 3}; &
	\node[draw]{\overline 3}; \\ 
	\node[draw]{\overline 4}; &   
	\node[draw]{\overline 4}; & 
	\node[draw]{\overline 3}; \\
 };
\end{tikzpicture}};
\path[<-,font=\scriptsize]
 (T) edge node[midway,fill=white,inner sep=1] {1} (e1T)
 	 edge node[midway,fill=white,inner sep=1] {3} (e3T)
	 edge node[midway,fill=white,inner sep=1] {4} (e4T);
\path[->,font=\scriptsize]
 (T) edge node[midway,fill=white,inner sep=1] {1} (f1T)
	 edge node[midway,fill=white,inner sep=1] {2} (f2T)
	 edge node[midway,fill=white,inner sep=1] {3} (f3T)
	 edge node[midway,fill=white,inner sep=1] {4} (f4T);
\end{tikzpicture}
\]
\end{Example}

\subsection{Crystal structure on Kostant partitions}

Here we review the crystal structure on Kostant partitions from \cite{SST1}. As explained there, this is naturally identified with the crystal structure on PBW monomials from, for example, \cite{BZ01,L10} for the reduced expression
\[
w_0 = 
(s_1 s_2 \cdots s_{n-2} s_{n-1} s_n s_{n-2} \cdots s_1)\cdots 
(s_{n-2} s_{n-1} s_n s_{n-2}) s_{n-1} s_n.
\]

Let $\R$ be the set of symbols $\{ (\beta) : \beta\in\Phi^+\}$.  Let $\Kp(\infty)$ be the free $\bz_{\ge0}$-span of $\R$.  This is the set of {\it Kostant partitions}. We denote elements of $\Kp(\infty)$ by $\bm\alpha = \sum_{(\beta)\in \R} c_\beta(\beta)$. If $c_\beta \neq 0$, we say that $\bm\alpha$ is {\it supported} on $\beta$ and that $(\beta)$ is a {\it part} of $\bm\alpha$.

\begin{Definition}\label{def:Phii}
Consider the following subsets of positive roots depending on $i\in I$.
\begin{enumerate}
\item For $1 \le i \le n-1$, define
\[
\Phi_i = \{ \beta_{k,i-1}, \beta_{k,i} : 1 \le k \le i\} \cup \{ \gamma_{k,i},\gamma_{k,i+1} : 1 \le k \le i-1\}
\]
and order the roots in $\Phi_i$ by
\[
\beta_{1,i} < \beta_{1,i-1} < \gamma_{1,i} < \gamma_{1,i+1} < \cdots < \beta_{i-1,i} < \beta_{i-1,i-1} < \gamma_{i-1,i} < \gamma_{i-1,i+1} < \beta_{i,i}.
\]
\item For $i = n$, define
\[
\Phi_{n} = \{ \beta_{k,n-2},\beta_{k,n-1} : 1 \le k \le n-2\} \cup \{ \gamma_{k,n-1},\gamma_{k,n} : 1 \le k \le n-2\} \cup \{\gamma_{n-1,n}\}
\]
and order the roots in $\Phi_n$ by
\begin{multline*}
\gamma_{1,n}<\beta_{1,n-2}<\gamma_{1,n-1}<\beta_{1,n-1} < \cdots \\ < \gamma_{n-2,n} < \beta_{n-2,n-2} < \gamma_{n-2,n-1} < \beta_{n-2,n-1} < \gamma_{n-1,n}.
\end{multline*}
\end{enumerate}
The {\it bracketing sequence} $S_i(\bm\alpha)$ consists of, for each $\beta\in \Phi_i$,  $c_\beta$-many `$)$' if $\beta-\alpha_i$ is a positive root and $c_\beta$-many `$($' if $\beta+\alpha_i$ is a positive root, ordered as above. Successively cancel $()$-pairs to obtain sequence of the form $)\cdots)(\cdots($. We call the remaining brackets {\it uncanceled}.
\end{Definition}

\begin{Definition} \label{def:KPops}
Let $i \in I$ and $\displaystyle \bm\alpha =  \sum_{(\beta)\in \R} c_\beta(\beta)\in\Kp(\infty)$.
\begin{itemize}
\item Let $\beta$ be the root corresponding to the rightmost uncanceled `$)$' in $S_i(\bm\alpha)$.  Define
\[
e_i\bm\alpha = \bm\alpha - (\beta) + (\beta-\alpha_i).
\]
If $\beta=\alpha_i$, we interpret $(0)$ as the additive identity in $\Kp(\infty)$.  If no such `$)$' exists, then $e_i\bm\alpha$ is undefined. 
\item Let $\gamma$ denote the root corresponding to the leftmost uncanceled `$($' in $S_i(\bm\alpha)$.  Define,
\[
f_i\bm\alpha = \bm\alpha - (\gamma) + (\gamma+\alpha_i).
\]
If no such `$($' exists, set
$
f_i\bm\alpha = \bm\alpha + (\alpha_i).
$
\item $\displaystyle\wt(\bm\alpha) = -\sum_{\beta\in\Phi^+} c_\beta\beta.$
\item $\varepsilon_i(\bm\alpha) = \text{number of `$)$' in the bracketing sequence of $\bm\alpha$}$.
\item $\varphi_i(\bm\alpha) = \varepsilon_i(\bm\alpha) + \langle \alpha_i^\vee , \wt(\bm\alpha) \rangle$.
\end{itemize}
\end{Definition}

\begin{Proposition}[\cite{SST1}]\label{prop:kp-is-crystal}
With the operations defined above, $\Kp(\infty)$ realizes $B(\infty)$.
\qed
\end{Proposition}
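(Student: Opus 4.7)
The plan is to reduce this to the standard description of $B(\infty)$ via Lusztig's PBW parametrization. First, I would identify each Kostant partition $\bm\alpha = \sum c_\beta(\beta)$ with the PBW monomial $\prod F_\beta^{(c_\beta)}$ built from Lusztig's root vectors for the given reduced expression $\underline{w}_0 = (s_1 \cdots s_{n-2} s_{n-1} s_n s_{n-2} \cdots s_1)\cdots(s_{n-2} s_{n-1} s_n s_{n-2}) s_{n-1} s_n$, where the product is taken in the convex ordering of $\Phi^+$ induced by $\underline{w}_0$. That $\Kp(\infty)$ then realizes $B(\infty)$ as a set of weight $-\sum c_\beta \beta$ is standard; what needs proof is that the operators in Definition \ref{def:KPops} agree with the Kashiwara operators in this parametrization.

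Next, I would invoke the general principle (as in \cite{BZ01,L10}) that the Kashiwara operators on the Lusztig parametrization can be computed by braid-moving $s_i$ to the initial (or terminal) position of $\underline{w}_0$, applying the trivial rule there, and braid-moving back; equivalently, the effect of $e_i$ and $f_i$ is governed by tropical rank-two (type $A_1\times A_1$, $A_2$, or $B_2$) computations on consecutive rank-two subwords. For each $i \in I$, only coefficients indexed by roots $\beta$ with $\beta\pm\alpha_i \in \Phi^+\cup\{0\}$ can change, and these are precisely the roots comprising $\Phi_i$ in Definition \ref{def:Phii}.

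The main technical step is then to verify that the linear order on $\Phi_i$ specified in Definition \ref{def:Phii} coincides with the order produced by the braid-move analysis, so that the leftmost uncanceled `(' (respectively rightmost uncanceled `)') in $S_i(\bm\alpha)$ picks out the correct root on which $f_i$ (resp.\ $e_i$) acts. Concretely, I would process the reduced expression $\underline{w}_0$ from left to right, and for each $i$ track how the rank-two subwords $s_{i-1} s_i s_{i-1}$, $s_i s_{n-1} s_n s_{i}$, etc., propagate the $i$-string structure across the block factorization of $\underline{w}_0$. The pattern of alternating $\beta_{k,i-1}, \beta_{k,i}$ and $\gamma_{k,i}, \gamma_{k,i+1}$ as $k$ decreases (and analogously for $i=n$) should emerge directly from this bookkeeping.

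I expect the verification of the ordering to be the main obstacle: the $D_n$ blocks contain a $B_2$-type subsystem near the forked end, and getting the interleaving of $\beta$'s and $\gamma$'s correct across the reduced-expression blocks is what makes the type $D$ case genuinely more delicate than type $A$. A cleaner alternative, which I would pursue if the direct computation becomes unwieldy, is to use the Tingley--Webster or Kashiwara--Saito axiomatic characterization of $B(\infty)$: one checks that the candidate crystal is connected with a unique highest weight element, has weights in $-Q^+$, and satisfies the Stembridge-type local axioms. Since the bracketing rule is manifestly well-defined and the $\varepsilon_i,\varphi_i$ formulas already satisfy $\varphi_i - \varepsilon_i = \langle \alpha_i^\vee,\wt\rangle$, only the axioms involving pairs $e_i, e_j$ need to be verified, and these reduce to finitely many rank-two checks.
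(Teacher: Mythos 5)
The paper does not actually prove this proposition: it is imported from \cite{SST1} with a tombstone, the identification with the PBW crystal for the stated reduced expression of $w_0$ being explained in that reference. Your first route---identifying $\bm\alpha=\sum c_\beta(\beta)$ with the PBW monomial $\prod F_\beta^{(c_\beta)}$ in the convex order coming from this reduced word, and computing the Kashiwara operators via \cite{BZ01,L10} by braid-moving $s_i$ to the end of the word and tracking the rank-two changes of Lusztig data---is essentially the argument of \cite{SST1} itself, so in spirit you are reconstructing the cited proof rather than offering a genuinely different one; your fallback via the Kashiwara--Saito/Tingley--Webster characterization is a legitimate alternative but is likewise only named, not executed.

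Two concrete points. First, your claim that ``the $D_n$ blocks contain a $B_2$-type subsystem near the forked end'' is false: $D_n$ is simply laced, so the only rank-two subsystems are $A_1\times A_1$ (the orthogonal pair $\alpha_{n-1},\alpha_n$) and $A_2$. It is precisely the absence of $B_2$/$G_2$ braid phenomena---in the language of \cite{SST1}, the fact that this reduced word is simply braided for every $i$---that makes the one-row bracketing rule of Definition \ref{def:KPops} possible at all, so this misstatement concerns the very feature your argument depends on. Second, the actual content of the proposition in this approach is the verification that only the roots in $\Phi_i$ are affected and that the linear order of Definition \ref{def:Phii} is the one produced by the braid-move bookkeeping (so that the leftmost uncanceled `(' and rightmost uncanceled `)' select the correct parts); you explicitly defer this computation, so as written the proposal is a plan for the proof in \cite{SST1} rather than a proof. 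Within the present paper the appropriate treatment is simply the citation, as the authors give.
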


\begin{Example}\label{ex:Ld_compute}
Let $i=n=4$ and consider
\begin{multline*}
\bm\alpha = 5(\alpha_1) + (\alpha_1+\alpha_2+\alpha_3+\alpha_4) + 3(\alpha_1+2\alpha_2+\alpha_3+\alpha_4)\\ 
+ 2(\alpha_2+\alpha_4) + (\alpha_2+\alpha_3) + (\alpha_2+\alpha_3+\alpha_4) + (\alpha_3) + 2(\alpha_4).
\end{multline*}
Look at the coefficients $c_\beta$ of $\bm\alpha$ corresponding to $\beta \in \Phi_4$.
\[\arraycolsep=4pt
\begin{array}{ccccccccc}
0\gamma_{1,4} & 0\beta_{1,2} & \gamma_{1,3} & 0\beta_{1,3} & 2\gamma_{2,4} & 0\beta_{2,2} & \gamma_{2,3} & \beta_{2,3} & 2\gamma_{3,4} \\[2pt]
&&)&&))&&)&\tikzmark{left}{{\color{red}(}}&\tikzmark{right}{{\color{red})}}\ \  {\color{blue} )} 
\end{array}
\]
\DrawLine[red, thick, opacity=0.5]{left}{right}

\noindent Hence, $e_4\bm\alpha = \bm\alpha - (\alpha_4) + (0) = \bm\alpha - (\alpha_4)$ and $f_4\bm\alpha = \bm\alpha + (\alpha_4)$.
\end{Example}

\section{The isomorphism} \label{sec:MLTtoKP}

\begin{Theorem} \label{th:main}
The unique crystal isomorphism $\Psi\colon \TT(\infty) \longrightarrow \Kp(\infty)$ can be described as follows. 
For a tableaux $T \in \TT(\infty)$, let $R_1,\dots,R_{n-1}$ denote the rows of $T$ starting at the top.  Set $\Psi(T) = \sum_{j=1}^{n-1} \Psi(R_j)$, where $\Psi(R_j)$ is defined in the following way:
\begin{enumerate}
\item if $j \neq n-1$, each $\overline{\jmath}$ is sent to $(\beta_{j,j}) + (\gamma_{j,j+1})$;

\item if $j = n-1$, each $\overline{\jmath}$ is sent to $(\beta_{n-1,n-1}) + (\gamma_{n-1,n})$;

\item each pair $ k, \overline k$, where $k\neq n-1$, maps to $(\beta_{j,k})+(\gamma_{j,k+1})$;

\item each pair $n-1, \overline{n-1}$ maps to $(\beta_{j,n-1}) + (\gamma_{j,n})$;

\item each remaining $k\in\{j,j+1,\dots,n\}$ is sent to $(\beta_{j,k-1})$;

\item each remaining $\overline{k}\in \{\overline{n},\overline{n-1},\dots,\overline{\jmath+1} \}$ is sent to $(\gamma_{j,k})$.

\end{enumerate}
\end{Theorem}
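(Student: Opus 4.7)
The plan is to apply the standard uniqueness criterion for crystal isomorphisms. Since $B(\infty)$ is connected and generated from its unique highest-weight element by iterated lowering operators, and both $\TT(\infty)$ (by \cite{HL08}) and $\Kp(\infty)$ (by Proposition~\ref{prop:kp-is-crystal}) realize $B(\infty)$, it suffices to verify
\begin{enumerate}
\item[(a)] $\Psi(T_\infty) = \mathbf{0}$, and
\item[(b)] $\Psi(f_i T) = f_i \Psi(T)$ for every $T \in \TT(\infty)$ and every $i \in I$.
\end{enumerate}
These two conditions force $\Psi$ to be the (unique) crystal isomorphism, since every element of $\TT(\infty)$ arises as $f_{i_1}\cdots f_{i_k} T_\infty$ and the same relations govern both crystals.

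Condition (a) follows by inspection: $T_\infty$ consists only of shaded placeholder $j$-boxes in row $j$, and rule (5) with $k=j$ sends each such box to $(\beta_{j,j-1})$, which is not a positive root and so contributes $\mathbf{0}$.

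For (b), I would fix $i$ and $T$ and directly compare $\BR_i(T)$ with $S_i(\Psi(T))$ row by row. For each row $R_j$, I would enumerate the entries of $R_j$ that contribute brackets to $\BR_i(T)$ (those adjacent to an $i$-colored arrow in Figure~\ref{fundD}) and the parts of $\Psi(R_j)$ supported on $\Phi_i$ (those contributing brackets to $S_i(\Psi(T))$). The central technical claim is that after internal cancellation within each row's block, both sequences reduce to the same canonical shape $)^{a_j}(^{b_j}$, where $a_j$ and $b_j$ depend only on certain counts of entries in $R_j$ (for instance, the numbers of $i$-, $(i+1)$-, $\overline{\imath+1}$-, and $\overline{\imath}$-boxes in the generic range, with analogous data near the fork when $i \in \{n-1,n\}$). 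Because the row blocks appear in the same top-to-bottom order on both sides, the two fully-reduced sequences coincide, so the leftmost uncanceled `$($' (and likewise the rightmost uncanceled `$)$') identifies the same position. A final check confirms that the resulting tableau move---including the marginal-largeness correction of inserting or deleting a column $1,\dots,i$---matches the root modification $\beta \mapsto \beta \pm \alpha_i$ on the Kostant side, with any added or deleted placeholder column corresponding to the zero contribution $(\beta_{j,j-1})$.

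The main obstacle is the non-locality of $\Psi$: unlike the type $A$ situation of \cite{CT15}, rules (1)--(4) send each $\bar\jmath$-entry and each $(k,\bar k)$-pair to a \emph{sum} of two positive roots, one of type $\beta_{j,*}$ and one of type $\gamma_{j,*+1}$. Consequently a single box-change in $T$ can correspond to modifying either the $\beta$- or the $\gamma$-component in $\Psi(T)$, and one must exploit the specific total order on $\Phi_i$ in Definition~\ref{def:Phii}---in which each $\beta$ sits immediately adjacent to its paired $\gamma$---to track which of the two is active under the signature rule on the Kostant partition side. The argument splits naturally into the generic range $1 \le i \le n-2$, which follows the type $A$ template but with the additional $\beta/\gamma$ bookkeeping, and the spin-like indices $i \in \{n-1,n\}$, where the fork in the Dynkin diagram, together with the special handling of $n$- and $\bar n$-entries (condition (5) of the definition) and the special $\bar\jmath$ rules (1)--(2), requires its own case analysis.
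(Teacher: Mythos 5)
Your proposal is correct and follows essentially the same route as the paper: the paper likewise reduces everything to checking $\Psi(f_i^\TT T)=f_i^\Kp\Psi(T)$, proves a one-row lemma (Lemma~\ref{one row}) showing that $\BR_i(R_j)$ and $S_i(\Psi(R_j))$ reduce to the same shape $)^{a_j}(^{b_j}$ and that the $f_i$-action agrees within the active row, and then concludes from the row-block factorizations of $\BR_i(T)$ and $S_i(\Psi(T))$. The per-row claim is verified there by exactly the four-case count of $i$-, $(i+1)$-, $\overline{\imath+1}$-, and $\overline{\imath}$-boxes you indicate (with canceling pairs $i-1$, $\overline{\imath-1}$ removed first), the only cosmetic difference being that the paper dispatches $i=n$ via the Dynkin automorphism exchanging $n-1$ and $n$ rather than a separate case analysis.
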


\begin{Example}
Let $n=4$ and 
\[
T = \begin{tikzpicture}[baseline]
\matrix [tab] 
 {
	\node[draw,fill=gray!30]{1}; &
	\node[draw,fill=gray!30]{1}; &
	\node[draw,fill=gray!30]{1}; &
	\node[draw,fill=gray!30]{1}; &
	\node[draw,fill=gray!30]{1}; &
	\node[draw,fill=gray!30]{1}; &
	\node[draw,fill=gray!30]{1}; &
	\node[draw,fill=gray!30]{1}; &
	\node[draw,fill=gray!30]{1}; &
	\node[draw]{2}; & 
	\node[draw]{2}; &
	\node[draw]{\overline 3}; &
	\node[draw]{\overline 1}; & 
	\node[draw]{\overline 1}; & 
	\node[draw]{\overline 1};\\
	\node[draw,fill=gray!30]{2}; &
	\node[draw,fill=gray!30]{2}; &
	\node[draw,fill=gray!30]{2}; &
	\node[draw,fill=gray!30]{2}; &
	\node[draw]{3}; & 
	\node[draw]{\overline 4}; &
	\node[draw]{\overline 3}; &
	\node[draw]{\overline 3}; \\
	\node[draw,fill=gray!30]{3}; &
	\node[draw]{\overline 4}; &  
	\node[draw]{\overline 3}; \\
 };
\end{tikzpicture}\ .
\]
Then 
\begin{align*}
\Psi(R_1) &= 3\bigl((\beta_{1,1})+(\gamma_{1,2})\bigr) + (\gamma_{1,3}) + 2(\beta_{1,1}),\\
\Psi(R_2) &= \bigl((\beta_{2,3})+(\gamma_{2,4})\bigr) + (\gamma_{2,3}) + (\beta_{2,4}),\\
\Psi(R_3) &= \bigl((\beta_{3,3})+(\gamma_{3,4})\bigr) + (\gamma_{3,4}),
\end{align*}
so
\[
\Psi(T) = 5(\beta_{1,1}) + (\gamma_{1,3}) + 3(\gamma_{1,2}) + 2(\gamma_{2,4}) + (\beta_{2,3}) + (\gamma_{2,3}) + (\beta_{3,3}) + 2(\gamma_{3,4}).
\]
Compare with Example \ref{ex:Ld_compute}.
\end{Example}

The proof of Theorem \ref{th:main} will occupy the rest of this section.  Denote by $e_i^\TT$ and $f_i^\TT$ the Kashiwara operators on $\TT(\infty)$ from Definition \ref{def:T-ops}, and by $e_i^\Kp$ and $f_i^\Kp$ those on $\Kp(\infty)$ from Definition \ref{def:KPops}.  

\begin{Lemma}\label{one row} 
Fix $i \in I$ and a row index $j$. 
Let $T \in \TT(\infty)$ be such that the only unshaded boxes appearing in $T$ occur in row $j$. Then $\BR_i(T)$ and $S_i(\Psi(T))$ have the same number of uncanceled brackets (both left and right). Furthermore, if $\BR_i(T)$ has an uncanceled left bracket, then $f_i^\Kp\Psi(T) = \Psi (f_i^{\TT}T )$.
\end{Lemma}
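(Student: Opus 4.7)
The plan is a direct verification that splits by the value of $i$ (the sub-cases $1\le i\le n-2$, $i=n-1$, and $i=n$) and, within each, by the position of $j$ relative to $i$ and $i+1$.

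First I reduce to a row-$j$ calculation. For each $k\neq j$ the row $R_k$ consists entirely of shaded $k$-boxes, and rule (5) of Theorem~\ref{th:main} sends each such letter to the empty sum $(\beta_{k,k-1})=0$; hence $\Psi(T)=\Psi(R_j)$. I then introduce notation $c_x$ for the multiplicity of content $x\in J(D_n)$ in $R_j$ and expand $\Psi(R_j)$ explicitly: rules (1)--(4) match each $k$-letter with an $\overline k$-letter whenever possible, contributing $(\beta_{j,k})+(\gamma_{j,k+1})$ per matched pair (with the spin-like adjustment when $k=n-1$), while rules (5)--(6) send each remaining letter to a single part. This lets me read each coefficient $c_\beta$ of $\Psi(R_j)$ directly from the $c_x$'s.

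Next I compare the two bracketing sequences explicitly. On the tableau side, $\BR_i(T)$ decomposes into a shaded-box template in rows $i$ and $i+1$ (or row $n-1$ when $i=n-1,n$), whose uncanceled brackets are controlled by the marginal-largeness identity $r_i=r_{i+1}+1$, together with contributions from unshaded letters in $R_j$ of content $i, i+1, \overline{i+1}, \overline i$ and their spin analogues. On the Kostant-partition side, $S_i(\Psi(R_j))$ is read off from the $\Phi_i$-coefficients in the order of Definition~\ref{def:Phii}. A side-by-side verification, case by case, shows that after intermediate cancellations the pair contributions from rules (3)--(4) and the singleton contributions from (5)--(6) reproduce exactly the uncanceled bracket counts from $R_j$ on the tableau side, yielding the claimed equality of both left- and right-uncanceled counts. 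The same bookkeeping pins down a correspondence between the leftmost uncanceled $($ on each side: it sits under a letter $y$ of $R_j$ and a matching root $\gamma$ produced by the image of $y$ in $\Psi(R_j)$, and the tableau replacement $y\mapsto y'$ (with any column-insertion required to preserve marginal largeness) matches $(\gamma)\mapsto(\gamma+\alpha_i)$ under $\Psi$, proving the second assertion.

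The main technical obstacle is the branching cases $i=n-1$ and $i=n$. There the $i$-arrow simultaneously involves the four letters $n-1, n, \overline n, \overline{n-1}$, and a single application of $f_i$ can re-pair or unpair letters on the tableau side in a way that reshuffles which of rules (1)--(6) applies; one must check this reshuffling exactly reproduces the change $(\gamma)\mapsto(\gamma+\alpha_i)$ on the Kostant side. The constraint that $n$ and $\overline n$ never coexist in a single row of a marginally large tableau plays a crucial role in eliminating degenerate sub-cases.
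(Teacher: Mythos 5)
Your outline follows the same general route as the paper: reduce to the single unshaded row, expand $\Psi(R_j)$ from the rules of Theorem~\ref{th:main}, compare the two bracketing sequences case by case, and treat the fork $i=n-1,n$ separately. But as written it has two genuine problems. First, the entire content of the lemma is the ``side-by-side verification'' that you assert but never perform: one must write down, for each configuration of the letters $i,i+1,\overline{\imath+1},\overline{\imath}$ in row $j$, the explicit partition $\Psi(R_j)$, the tableau $f_i^\TT R_j$, and the sequences $\BR_i(R_j)$ and $S_i(\Psi(R_j))$, and check both the count statement and $f_i^\Kp\Psi(R_j)=\Psi(f_i^\TT R_j)$. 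The paper does exactly this: it first discards pairs $(i-1,\overline{\imath-1})$, which give no brackets in $\BR_i$ and a cancelling pair $(\beta_{j,i-1}),(\gamma_{j,i})$ in $S_i(\Psi)$, and then runs four cases according to $p>q$ or $p\le q$ and $r>s$ or $r\le s$, where $p,q,r,s$ count the letters $\overline{\imath+1},\,i+1,\,i,\,\overline{\imath}$. For $i=n-1$ condition (5) on marginally large tableaux forces $p$ or $q$ to vanish, and the case $i=n$ is obtained from $i=n-1$ by the Dynkin automorphism exchanging $n-1$ and $n$ (i.e.\ $n\leftrightarrow\overline n$ on tableaux), so the ``re-pairing at the fork'' you flag as the main obstacle never has to be analyzed directly. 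Without these computations your argument is a plan rather than a proof.

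Second, the one structural claim you add --- that one can keep the shaded placeholder template of rows $i$ and $i+1$ inside $\BR_i(T)$ and still get \emph{exact} equality of uncanceled bracket counts --- is not correct: marginal largeness leaves exactly one extra uncanceled `$($' coming from the placeholders, visible already in the paper's own Example~\ref{lemma example}, where $\BR_2(T)$ has three uncanceled `$($' while $S_2(\Psi(T))$ has only two. The comparison that actually closes (and the one the paper makes) is between the brackets produced by the letters of row $j$ and $S_i(\Psi(R_j))$; the leftover placeholder `$($' sits to the right of the row-$j$ brackets, so it never changes which bracket is leftmost uncanceled among those coming from row $j$, and it is accounted for elsewhere by the default rule $f_i^\Kp\bm\alpha=\bm\alpha+(\alpha_i)$, matching $f_i^\TT$ converting a shaded $i$ into $i+1$. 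If you carry out your bookkeeping literally, template included, the identity you are trying to prove will not hold, so you must either restrict to the row-$j$ brackets as the paper does or explicitly isolate and discharge this off-by-one contribution.
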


\begin{proof}
First consider $i\in\{1,\dots,n-1\}$.  We are only interested in entries $i,i+1$, $\overline{\imath+1}$, and $\overline\imath$, along with pairs $i-1$, $\overline{\imath-1}$, since these are the only entries that result in brackets in $\BR_i(T)$ or in $S_i(\Psi(T))$.
 
First consider a pair $i-1$ and $\overline{\imath-1}$: This corresponds to no brackets in $\BR_i(T)$, and to $(\beta_{j,i-1})$, $(\gamma_{j,i})$ in $\Psi(T),$ which gives a canceling pair of brackets in $S_i(\Psi(T)$. 
So the statement is true for $T$ if and only if it is true tor the tableau with this pair removed. 
Thus we can assume $T$ has no such pairs. 

Assume row $j$ of $T$ has
$p$ boxes of $\overline{\imath+1}$, $q$ of $i+1$, $r$ of $i$, $s$ of $\overline\imath$: 
\[
\raisebox{10pt}{$R_j =$}\
\begin{array}{|c|c|c|c|c|c|c|c|c|c|c|c|c|}
\hline
i&\cdots & i & i+1 & \cdots & i+1 & \cdots & \raisebox{-1px}{$\overline{\imath+1}$} & \cdots & \raisebox{-1px}{$\overline{\imath+1}$} & \overline\imath & \cdots & \overline\imath \\
\hline
\multicolumn{12}{c}{}\\[-20pt]
\multicolumn{3}{c}{\underbrace{\rule{.12\columnwidth}{0pt}}_{r}} &
\multicolumn{3}{c}{\underbrace{\rule{.17\columnwidth}{0pt}}_{q}} &
\multicolumn{1}{c}{} &
\multicolumn{3}{c}{\underbrace{\rule{.17\columnwidth}{0pt}}_{p}} &
\multicolumn{3}{c}{\underbrace{\rule{.12\columnwidth}{0pt}}_{s}}
\end{array}.
\]
We consider four cases.

{\bf Case 1:} $p>q$ and $r>s$.  Then
\[
\Psi(R_j) = (r-s) (\beta_{j,i-1}) + s(\beta_{j,i})+ q(\beta_{j,i+1}) + q(\gamma_{j,i+2}) + (s+p-q)(\gamma_{j,i+1})
\] 
and
\[
\raisebox{10pt}{$f^\TT_i(R_j)=$}\
\begin{array}{|c|c|c|c|c|c|c|c|c|c|c|c|c|}
\hline
i&\cdots & i & i+1 & \cdots & i+1 & \cdots & \raisebox{-1px}{$\overline{\imath+1}$} & \cdots & \raisebox{-1px}{$\overline{\imath+1}$} & \overline\imath & \cdots & \overline\imath \\
\hline
\multicolumn{12}{c}{}\\[-20pt]
\multicolumn{3}{c}{\underbrace{\rule{.12\columnwidth}{0pt}}_{r}} &
\multicolumn{3}{c}{\underbrace{\rule{.17\columnwidth}{0pt}}_{q}} &
\multicolumn{1}{c}{} &
\multicolumn{3}{c}{\underbrace{\rule{.17\columnwidth}{0pt}}_{p-1}} &
\multicolumn{3}{c}{\underbrace{\rule{.12\columnwidth}{0pt}}_{s+1}}
\end{array}
\]
giving 
\begin{align*}
f_i^{\Kp}&\Psi(R_j)\\
&= (r-s-1) (\beta_{j,i-1}) + (s+1)(\beta_{j,i}) + q (\beta_{j,i+1}) + q(\gamma_{j,i+2}) + (s+p-q)(\gamma_{j,i+1}) \\
&= \Psi(f_i^\TT R_j).
\end{align*}
Furthermore
\[
\BR_i(R_j) =\  )^s\  (^p\  )^q\  (^r\  
\quad
\text{ and }
\quad
S_i\bigl(\Psi(R_j)\bigr) =\  )^s\ (^{r-s}\ (^{s+p-q},
\]
so both $\BR_i(R_j) $ and $S_i(\Psi(T))$ have $s$ uncanceled `)' and $r+p-q$ uncanceled `(.'

{\bf Case 2:}  $p > q$ and $r \le s$. Then 
\[
\Psi(R_j) =  r (\beta_{j,i}) +  q(\beta_{j,i+1}) + q(\gamma_{j,i+2}) + (r+p-q)(\gamma_{j,i+1}) + (s-r)(\gamma_{j,i})
\] 
and
\[
\raisebox{10pt}{$f^\TT_i(R_j)=$}\
\begin{array}{|c|c|c|c|c|c|c|c|c|c|c|c|c|}
\hline
i&\cdots & i & i+1 & \cdots & i+1 & \cdots & \raisebox{-1px}{$\overline{\imath+1}$} & \cdots & \raisebox{-1px}{$\overline{\imath+1}$} & \overline\imath & \cdots & \overline\imath \\
\hline
\multicolumn{12}{c}{}\\[-20pt]
\multicolumn{3}{c}{\underbrace{\rule{.12\columnwidth}{0pt}}_{r}} &
\multicolumn{3}{c}{\underbrace{\rule{.17\columnwidth}{0pt}}_{q}} &
\multicolumn{1}{c}{} &
\multicolumn{3}{c}{\underbrace{\rule{.17\columnwidth}{0pt}}_{p-1}} &
\multicolumn{3}{c}{\underbrace{\rule{.12\columnwidth}{0pt}}_{s+1}}
\end{array}
\]
giving
\begin{align*}
f_i^{\Kp}&\Psi (R_j)\\ 
&=  
r(\beta_{j,i}) + q(\beta_{j,i+1}) + q(\gamma_{j,i+2}) + (r+p-q-1)(\gamma_{j,i+1}) + (s-r+1)(\gamma_{j,i}) \\
&= \Psi(f_i^\TT R_j).
\end{align*}
Again, both $\BR_i(R_j) $ and $S_i(\Psi(T))$ have $s$ uncanceled `)' and $r+p-q$ uncanceled `(.'

{\bf Case 3:}  $p \le q$ and $r > s$. 
\[
\Psi(R_j) = (r-s)(\beta_{j,i-1}) + (q-p+s)(\beta_{j,i}) + p(\beta_{j,i+1}) + p(\gamma_{j,i+2}) + s(\gamma_{j,i+1})
\]
and
\[
\raisebox{10pt}{$f^\TT_i(R_j)=$}\
\begin{array}{|c|c|c|c|c|c|c|c|c|c|c|c|c|}
\hline
i&\cdots & i & i+1 & \cdots & i+1 & \cdots & \raisebox{-1px}{$\overline{\imath+1}$} & \cdots & \raisebox{-1px}{$\overline{\imath+1}$} & \overline\imath & \cdots & \overline\imath \\
\hline
\multicolumn{12}{c}{}\\[-20pt]
\multicolumn{3}{c}{\underbrace{\rule{.12\columnwidth}{0pt}}_{r-1}} &
\multicolumn{3}{c}{\underbrace{\rule{.17\columnwidth}{0pt}}_{q+1}} &
\multicolumn{1}{c}{} &
\multicolumn{3}{c}{\underbrace{\rule{.17\columnwidth}{0pt}}_{p}} &
\multicolumn{3}{c}{\underbrace{\rule{.12\columnwidth}{0pt}}_{s}}
\end{array}
\]
giving 
\begin{align*}
f_i^{\Kp}&\Psi(R_j)\\
&= (r-s-1)(\beta_{j,i-1}) + (q-p+s+1)(\beta_{j,i}) + p(\beta_{j,i+1}) + p(\gamma_{j,i+2}) + s(\gamma_{j,i+1}) \\
&= \Psi(f_i^\TT R_j).
\end{align*}
Both $\BR_i(R_j) $ and $S_i(\Psi(T))$ have $s+q-p$ uncanceled `)' and $r$ uncanceled `(.'

{\bf Case 4:}  $p \le q$ and $r \le s$. Then 
\[
\Psi(R_j) = (q-p+r)(\beta_{j,i}) + p(\beta_{j,i+1}) + p(\gamma_{j,i+2}) + r(\gamma_{j,i+1}) + (s-r)(\gamma_{j,i})
\] 
and
\[
\raisebox{10pt}{$f^\TT_i(R_j)=$}\
\begin{array}{|c|c|c|c|c|c|c|c|c|c|c|c|c|}
\hline
i&\cdots & i & i+1 & \cdots & i+1 & \cdots & \raisebox{-1px}{$\overline{\imath+1}$} & \cdots & \raisebox{-1px}{$\overline{\imath+1}$} & \overline\imath & \cdots & \overline\imath \\
\hline
\multicolumn{12}{c}{}\\[-20pt]
\multicolumn{3}{c}{\underbrace{\rule{.12\columnwidth}{0pt}}_{r-1}} &
\multicolumn{3}{c}{\underbrace{\rule{.17\columnwidth}{0pt}}_{q+1}} &
\multicolumn{1}{c}{} &
\multicolumn{3}{c}{\underbrace{\rule{.17\columnwidth}{0pt}}_{p}} &
\multicolumn{3}{c}{\underbrace{\rule{.12\columnwidth}{0pt}}_{s}}
\end{array}
\]
giving 
\begin{align*}
f_i^{\Kp}&\Psi(R_j) \\
&= (q-p+r)(\beta_{j,i}) + p(\beta_{j,i+1}) + p(\gamma_{j,i+2}) + (r-1)(\gamma_{j,i+1}) + (s-r+1)(\gamma_{j,i})\\
&= \Psi(f_i^\TT R_j).
\end{align*}
Again both $\BR_i(R_j) $ and $S_i(\Psi(T))$ have $s+q-p$ uncanceled `)' and $r$ uncanceled `(.'

The $i=n-1$ case follows by the same argument, except there will never be both $i+1$ and $\overline{\imath+1}$ in the same row, so either $p$ or $q$ will be zero. The $i=n$ case follows from the $i=n-1$ case 
using the Dynkin automorphism exchanging $n-1$ and $n$, which has the effect on tableau of interchanging the symbols $\bar n$ and $n$. (See Figure \ref{fundD}.)
\end{proof}

\begin{Example}\label{lemma example}
Consider type $D_4$ and $i=2$, and the tableau
\[
T = \begin{tikzpicture}[baseline=-3]
\matrix [tab] 
 {
	\node[draw,fill=gray!30]{1}; &
	\node[draw,fill=gray!30]{1}; &
	\node[draw,fill=gray!30]{1}; &
	\node[draw]{2}; & 
	\node[draw]{2}; &
    \node[draw]{3}; & 
    \node[draw]{4}; & 
	\node[draw]{\overline 3}; &
	\node[draw]{\overline 1}; & 
	\node[draw]{\overline 1};\\
	\node[draw,fill=gray!30]{2}; &
	\node[draw,fill=gray!30]{2}; \\
	\node[draw,fill=gray!30]{3}; \\
 };
\end{tikzpicture}\ .
\]
Then the reading word and bracketing sequence are
\[
\arraycolsep=2pt
\begin{array}{rccccccccccccccccccccccccccc}
& \overline1 & \overline1 & \overline3 & 4 & 3 & 2 & 2 & 1 & 1 & 1 & 2 & 2 & 3  \\
\BR_2(T) = & & & \tikzmark{left2}{\color{red}(} & & \tikzmark{right2}{\color{red})} & {\color{blue} (} & ( & & & & ( & \tikzmark{left3}{\color{red}(} & \tikzmark{right3}{\color{red})}   \\
\end{array}
\]
\DrawLine[red, thick, opacity=0.5]{left2}{right2}
\DrawLine[red, thick, opacity=0.5]{left3}{right3}

\noindent so 
\[
f_2^\TT T = 
\begin{tikzpicture}[baseline=-3]
\matrix [tab] 
 {
	\node[draw,fill=gray!30]{1}; &
	\node[draw,fill=gray!30]{1}; &
	\node[draw,fill=gray!30]{1}; &
	\node[draw]{2}; & 
	\node[draw]{3}; &
    \node[draw]{3}; & 
    \node[draw]{4}; & 
	\node[draw]{\overline 3}; &
	\node[draw]{\overline 1}; & 
	\node[draw]{\overline 1};\\
	\node[draw,fill=gray!30]{2}; &
	\node[draw,fill=gray!30]{2}; \\
	\node[draw,fill=gray!30]{3}; \\
 };
\end{tikzpicture}
\]
Direct calculation gives
\begin{align*}
\Psi(T) &= 2\bigl((\beta_{1,1})+(\gamma_{1,2})\bigr) + \bigl((\beta_{1,3})+(\gamma_{1,4})\bigr) + 2(\beta_{1,1}) + (\beta_{1,3}) \\
&= 4(\beta_{1,1}) + 2(\beta_{1,3})+(\gamma_{1,4})+2(\gamma_{1,2})
\intertext{and}
\Psi(f_2^\TT T) &= 2\bigl((\beta_{1,1})+(\gamma_{1,2})\bigr) + \bigl((\beta_{1,3})+(\gamma_{1,4})\bigr) + (\beta_{1,1}) + (\beta_{1,2}) + (\beta_{1,3})\\
&= 3(\beta_{1,1}) + (\beta_{1,2}) + 2(\beta_{1,3}) + (\gamma_{1,4}) + 2(\gamma_{1,2}) .
\end{align*}
The bracketing sequence on Kostant partitions is 
\[
\arraycolsep=4pt
\begin{array}{cccccc}
& 0\beta_{1,2} & 4\beta_{1,1} & 2\gamma_{1,2} & 0\gamma_{1,3} & 0\beta_{2,2}, \\[2pt] 
S_2(\Psi(T))=& & {\color{blue} (}(\ \ {\tikzmark{left}{\color{red}((}} & {\tikzmark{right}{\color{red}))}} &  & \\[2pt]
\end{array}
\]
\DrawLine[red, thick, opacity=0.5]{left}{right}

\noindent so $f_2^\Kp(\Psi(T)) = \Psi(T) -(\beta_{1,1})+(\beta_{1,1}+\alpha_2)$. Since $(\beta_{1,1}+\alpha_2) = (\beta_{1,2})$ this agrees with $\Psi(f_2(T)$.
\end{Example}

\begin{proof}[Proof of Theorem \ref{th:main}] If suffices to show that, for all $i$, $f_i^\TT\Psi(T) = \Psi(f_i^{\Kp}T)$.
By the definition of the bracketing sequences and of $\Psi$ we have
\begin{equation*}
\BR_i(T) \text{  factors as }
\BR_i(R_1) \BR_i(R_2) \cdots \BR_i(R_{n-1}), \text{ and}
\end{equation*}
\begin{equation*} S_i(\Psi(T)) \text{ factors as }
S_i\bigl(\Psi(R_1)\bigr) S_i\bigl(\Psi(R_2)\bigr) \cdots S_i\bigl(\Psi(R_{n-1})\bigr).
\end{equation*}
By Lemma \ref{one row}, each $\BR_i(R_t)$ has the same number of uncanceled brackets as each $S_i(\Psi(R_t))$. Hence the first uncanceled `(' in  $\BR_i(T)$ and in $S_i(\Psi(T))$ occur in the same factor, say from row $R_j$. But then, also by Lemma \ref{one row}, $f_i^\TT\Psi(R_j) = \Psi(f_i^{\Kp}R_j)$, so in fact $f_i^\TT\Psi(T) = \Psi(f_i^{\Kp}T)$. 
\end{proof}

\section{Stack notation}

As mentioned in the introduction, this work is a type $D$ analogue of a type $A$ result found in \cite{CT15}.  That type $A$ result may be described within the framework of multisegments \cite{JL09,LTV99,Z80},  which have the advantage of a convenient diagrammatic notation which makes the crystal structure apparent.  By analogy, one may introduce a \emph{stack} notation for Kostant partitions in type $D$ in which the crystal structure may be read off easily.  

Make the association,
\[
\beta_{i,k} = \stack{k\\\mdots\\i}\ , \qquad 
\gamma_{j,\ell} = \stack{\phantom{2}\\\ell+1\\ \mdots \\ n-2 \\  n-1\ n \\ n-2 \\ \mdots \\ j\\\phantom{2}}\ ,
\]
where $1\le i \le k \le n-1$ and $1 \le j < \ell \le n$.  Given $i\in I$, the set $\Phi_i$ from Definition \ref{def:Phii} is the set of roots for which $i$ may be either added or removed from the top of the stack to obtain a stack for another root.  If $i\neq n$, the order imposed on $\Phi_i$ in Definition \ref{def:Phii} is
\[
\stack{i\\\mdots\\1} < \stack{i-1\\\mdots\\1} < \stack{i\\ \mdots \\ n-2 \\ n-1\ n \\ n-2 \\ \mdots \\ 1} < \stack{i+1\\ \mdots \\ n-2 \\ n-1\ n \\ n-2 \\ \mdots \\ 1} 
< \cdots < 
\stack{i\\i-1} < i-1 < \stack{i\\ \mdots \\ n-2 \\ n-1\ n \\ n-2 \\ \mdots \\ i-1} < \stack{i+1\\ \mdots \\ n-2 \\ n-1\ n \\ n-2 \\ \mdots \\ i-1}
<
i.
\]
If $i=n$, then the order on $\Phi_n$ may be depicted as
\[
\stack{n\\n-2\\\mdots\\1} < \stack{n-2\\\mdots\\1} < \stack{n-1\ n \\ n-2 \\ \mdots \\ 1} < \stack{n-1\\n-2\\ \mdots \\1}
< \cdots < 
\stack{n\\n-2} < \stack{n-2} < \stack{n-1\ n \\ n-2} < \stack{n-1\\n-2}
< n.
\]
The brackets in $S_i(\bm \alpha)$ correspond to the stacks, and the crystal operators from Definition \ref{def:KPops} act by adding or removing $i$ from the top of an appropriate stack: $f_i$ adds $i$ to the top of the stack corresponding to the leftmost uncanceled `$($'. 

\begin{Example}
The Kostant partition from Example \ref{ex:Ld_compute} may be written as
\[
\bm\alpha = 1\ 1\ 1\ 1\ 1\ \stack{3\,4\\2\\1}\ \stack{2\\3\,4\\2\\1}\ \stack{2\\3\,4\\2\\1}\ \stack{2\\3\,4\\2\\1}\ \stack{4\\2}\ \stack{4\\2}\ \stack{3\\2}\ \stack{3\,4\\2}\ 3\ 4\ 4\ .
\]
The support of $\bm\alpha$ in $\Phi_4$, in order, is 
\[\arraycolsep=4pt
\begin{array}{rcccccccl}
&\stack{3\,4\\2\\1} & \stack{4\\2} & \stack{4\\2} & \stack{3\,4\\2} & \stack{3\\2} & 4 & 4 \\[5pt]
S_4(\bm\alpha) =& )&)&)&)&\tikzmark{left7}{{\color{red}(}}&\tikzmark{right7}{{\color{red})}}&) &,
\end{array}
\]
\DrawLine[red, thick, opacity=0.5]{left7}{right7}

\noindent so
\[
f_4\bm\alpha = 1\ 1\ 1\ 1\ 1\ \stack{3\,4\\2\\1}\ \stack{2\\3\,4\\2\\1}\ \stack{2\\3\,4\\2\\1}\ \stack{2\\3\,4\\2\\1}\ \stack{4\\2}\ \stack{4\\2}\ \stack{3\\2}\ \stack{3\,4\\2}\ 3\ 4\ 4\ 4\ .
\]
\end{Example}

\begin{acknowledgements}
We thank the developers of Sage \cite{sage,combinat}, where many of the motivating calculations for this work were executed.  B.S.\ also thanks Loyola University of Chicago for its hospitality during a visit in which this manuscript began.
\end{acknowledgements}

\bibliography{KP-crystal}{}
\bibliographystyle{amsplain}
\end{document}